\documentclass[12pt]{article}

\usepackage[T1]{fontenc}
\usepackage{avant}
\usepackage[cp1250]{inputenc}
\usepackage{amsmath,amssymb,amsthm}
\usepackage{amscd}
\usepackage{mathtools} 
\usepackage{stmaryrd} 
\usepackage[ruled,linesnumbered,norelsize]{algorithm2e} 
\usepackage{longtable} 
\usepackage{url} 
\usepackage{authblk} 

\usepackage{tikz}
\usepackage[all]{xy}
\newtheorem{theorem}{Theorem}
\newtheorem{proposition}{Proposition}

\newtheorem{definition}{Definition}

\newtheorem{problem}{Problem}

\title{On non-positive Weyl connections on  Lie groups}
\author{Tomasz Dudek and Aleksy Tralle}

\begin{document}
\maketitle{}

\section{Introduction}
Let there be given a Riemannian manifold $(M,\langle -,-\rangle)$. Let $\nabla$ denote the Levi-Civita coonection.
By definition, a Weyl connection is given by the formula
$$\hat\nabla_XY=\nabla_XY+\varphi(Y)X+\varphi(X)Y-\langle X,Y\rangle E,$$
where $\varphi$ is a one-form $\varphi(X)=\langle X,E\rangle$, $X,Y$ are arbitrary vector ields, $E$ is fixed.
Given a Weyl connection $\widehat{\nabla}$ one considers the curvature tensor
$$\widehat{R}(X,Y)=\widehat{\nabla}_XY\widehat{\nabla}_Y-\widehat{\nabla}_Y\widehat{\nabla}_X-\widehat{\nabla}_{[X,Y]}$$ 
and takes the {\it anti-symmetric} part $\widehat{R}_a$ of the tensor $\widehat{R}$. This tensor is called {\it the directional curvature}.
\begin{definition} The Weyl sectional curvature {\rm is defined as}
$$\widehat{K}(\Pi)=\langle \widehat{R}_a(X,Y)Y,X\rangle,\Pi=
\operatorname{Span}_{\mathbb{R}}(X,Y),\,|X|=|Y|=1, X\perp Y.$$
\end{definition}
\begin{proposition}[\cite{TW}] The sign of $\widehat{K}(\Pi)$ is well defined.
\end{proposition}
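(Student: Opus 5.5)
The plan is to show that $\widehat{K}(\Pi)$ does not depend on the choice of orthonormal basis of $\Pi$, and that under the natural ambiguity of the data defining $\widehat{\nabla}$ it changes only by a positive factor. The ambiguity is the choice of a metric in the conformal class: replacing $\langle-,-\rangle$ by $e^{2f}\langle-,-\rangle$ and $\varphi$ by $\varphi+df$ gives the same connection. So the sign is the only invariant part. I will treat three things in turn: the structure of $\widehat{R}_a$, independence of the basis, and the conformal rescaling.

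\emph{Step 1: structure of $\widehat{R}_a$.} From the defining formula one checks that $\widehat{\nabla}\langle-,-\rangle=-2\varphi\otimes\langle-,-\rangle$. So $\widehat{\nabla}$ preserves the conformal class, and parallel transport is a conformal linear map. Differentiating, each $\widehat{R}(X,Y)$ lies in $\mathfrak{co}(T_pM)=\mathfrak{so}(T_pM)\oplus\mathbb{R}\,\mathrm{Id}$. Concretely, $\widehat{R}(X,Y)=\widehat{R}_a(X,Y)+c\,d\varphi(X,Y)\,\mathrm{Id}$ for a universal constant $c$, with $\widehat{R}_a(X,Y)$ skew-adjoint. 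Note that skew-adjointness of an endomorphism is unchanged when the metric is multiplied by a positive function. Hence the splitting $\mathfrak{so}\oplus\mathbb{R}\,\mathrm{Id}$, and therefore $\widehat{R}_a$ itself, depends only on $\widehat{\nabla}$ and the conformal class, not on the representative metric. This is the step where the precise meaning of ``anti-symmetric part'' matters, and I expect it to be the main point to nail down. I would verify it directly by computing $\langle\widehat{R}(X,Y)Z,W\rangle+\langle Z,\widehat{R}(X,Y)W\rangle$ from $\widehat{\nabla}\langle-,-\rangle=-2\varphi\otimes\langle-,-\rangle$.

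\emph{Step 2: independence of the basis.} Set $B(X,Y,Z,W)=\langle\widehat{R}_a(X,Y)W,Z\rangle$. This is skew in $(X,Y)$, since $\widehat{R}$ is, and skew in $(Z,W)$, by Step 1. Therefore $B$ descends to a bilinear form $\beta$ on $\Lambda^2T_pM$, and $\widehat{K}(\Pi)=\beta(X\wedge Y,X\wedge Y)$. Any two orthonormal bases of $\Pi$ give $X'\wedge Y'=\pm X\wedge Y$, because the determinant of an orthogonal $2\times2$ matrix is $\pm1$. So $\widehat{K}(\Pi)$ is independent of the basis. Pair symmetry of $B$ is not needed, since only the diagonal of $\beta$ enters.

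\emph{Step 3: conformal change.} Let $g'=e^{2f}g$. By Step 1, $\widehat{R}_a$ is unchanged. An orthonormal basis of $\Pi$ for $g'$ is $(e^{-f}X,e^{-f}Y)$. Hence
\[
\widehat{K}'(\Pi)=e^{2f}\,e^{-4f}\langle\widehat{R}_a(X,Y)Y,X\rangle=e^{-2f}\widehat{K}(\Pi),
\]
and the positive factor $e^{-2f}$ preserves the sign. This completes the argument that the sign of $\widehat{K}(\Pi)$ is well defined, as claimed in the Proposition.
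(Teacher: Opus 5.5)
Your argument is correct. The paper itself gives no proof of this proposition; it quotes it from \cite{TW} to justify speaking of the sign of $\widehat K$. So there is no internal route to compare against, and your proof stands as a self-contained substitute.

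Its merit is that it makes explicit the two things ``well defined'' must cover. The first is independence of the orthonormal basis of $\Pi$ (Step 2). Passing to the form $\beta$ on $\Lambda^2T_pM$ rightly avoids pair symmetry, which $\widehat R_a$ need not satisfy when $d\varphi\neq 0$. The second is that $\widehat K(\Pi)$ depends on the representative of the conformal class only through the positive factor $e^{-2f}$ (Step 3).

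\textbf{The gauge sign.} With the paper's formula one gets $\widehat\nabla\langle-,-\rangle=-2\varphi\otimes\langle-,-\rangle$, as you say. Hence
\[
\widehat\nabla(e^{2f}\langle-,-\rangle)=-2(\varphi-df)\otimes e^{2f}\langle-,-\rangle,
\]
so the change preserving $\widehat\nabla$ is $\varphi\mapsto\varphi-df$, not $\varphi+df$. This does not affect Step 3, which only uses that $\widehat\nabla$ and the conformal class are unchanged.

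\textbf{Step 1 is easier than you expect.} At each point, an endomorphism of $T_pM$ has the same adjoint with respect to $\langle-,-\rangle$ and to $e^{2f(p)}\langle-,-\rangle$. So the skew part of \emph{any} endomorphism is conformally invariant. The finer fact is also true: $\widehat R(X,Y)\in\mathfrak{co}(T_pM)$, with symmetric part a multiple of $d\varphi(X,Y)\,\mathrm{Id}$. It follows from the Ricci identity applied to $\widehat\nabla\langle-,-\rangle=-2\varphi\otimes\langle-,-\rangle$ and explains the name ``directional curvature''. Steps 2 and 3 do not need it.
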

There are several reasons for an interest in studying such connections and their curvature properties. First, we mention the applications in dynamical systems. In \cite{W} the notion of W-flow on the unit tangent bundle to $(M,\langle -,-\rangle)$ was introduced. The orbits of such flow are geodesics of $\widehat{\nabla}$. An interesting theorem in \cite{W}, Theorem 5.1  says that (in particular) on a Riemannian manifold with a Weyl connection whose {\it Weyl curvature} is negative everywhere the W-flow $\Phi$ has dominated splitting, that is, the tangent bundle continuously splits as 
$TM=E\oplus F$ in a way that the subbundles $E$ and $F$ are invariant with respect to the differential $\Phi_*: TM\rightarrow TM$ and  have the following property: 
$$||\Phi_*)^N_xv||\leq {1\over 2}||(\Phi_*)_x(w)||,\qquad \forall v\in E_x, 
\quad w\in F_x.$$ 
It follows that W-flows on such compact  manifolds  have a kind of "hyperbolic" properties akin to the Anosov property. 

The second reason comes from the theory of Gaussian thermostats \cite{H} which yields interesting models in statistical physics.

 It follows that a problem of  description of Riemannian manifolds with negative Weyl connections is intriguing, but difficult, and still not much is known. In particular, the following problem was posed in \cite{W}.         

\begin{problem}\label{probl-gen}
 Construct a Weyl structure $(M,\langle -,-\rangle,\widehat{\nabla})$ with negative Weyl curvature on a manifold which does not carry a Riemannian metric of negative sectional curvature.
\end{problem}
It is quite natural to begin with the case of homogeneous spaces $G/H$ of Lie groups and invariant Weyl connections, since in this case one can use the power of Lie theory. 
\begin{problem}\label{probl:inv}
Describe invariant Weyl connections on homogeneous spaces and solve Problem \ref{probl-gen} in the homogeneous case. 
\end{problem}
The first results in this direction were obtained in \cite{TW}. In this work, the authors propose a way to look for Weyl connections with {\it non-positive} Weyl curvature by rescaling the vector field $E$ which determines $\widehat{\nabla}$.   
\begin{definition} {\rm By definition a {\it stretching} of $\widehat{\nabla}$ is a one-parameter  family $\widehat{\nabla}_t$ of Weyl connections}
$$\widehat{\nabla}_t=\nabla_XY+t\varphi(X)Y+t\varphi(Y)X-t^2\langle X, Y\rangle E.$$
A connection $\widehat{\nabla}$ is called stretched non-positive (SNP) if there exists a stretching such that $\widehat{\nabla}_t$ is non-positive for sufficiently large $t$.
\end{definition}     
Since the problem (even for SNP) is difficult for arbitrary homogeneous spaces, it is natural to begin with invariant Riemannian metrics on a Lie group $G$  and consider  Lie groups endowed with left-invariant Riemannian metrics and invariant Weyl connections determined by left-invariant vector fields $E\in\mathfrak{g}$ (that is, vectors in the corresponding Lie algebra $\mathfrak{g}$). In \cite {TW} the authors analyzed the case of 3-dimensional Lie groups and found an (isolated) example of non-positive on a solvable Lie group SOL. They conjecture that in 3-dimensional case, only SOL carries such connections. We confirm this in Theorem \ref{thm:main} in the case of completely solvable Lie groups.  In dimension 4,  SNP connections were found on some 4-dimensional solvable Lie groups, and the structure of solvable Lie groups endowed with SNP connctions determined by parallel vector fields $E$ was described in \cite{BJT}. In this note we continue this line of research. In particular we prove the following. 
\begin{theorem}\label{thm:main} A 3-dimensonal unimodular completely solvable Lie group admits  a left-invariant Riemannian metric and a left- invariant Weyl connection $\widehat{\nabla}$ determined by a choice of $E\in\mathfrak{g}$ which has the SNP property if and only if it is the group SOL.
\end{theorem}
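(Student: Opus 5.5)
The plan is to reduce to Milnor's normal form and then compare powers of $t$ in the curvature of the stretching. By Milnor's classification, every 3-dimensional unimodular metric Lie algebra has an orthonormal basis $e_1,e_2,e_3$ with $[e_2,e_3]=\lambda_1e_1$, $[e_3,e_1]=\lambda_2e_2$, $[e_1,e_2]=\lambda_3e_3$.

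Complete solvability forces all $\operatorname{ad}$-eigenvalues to be real. This rules out the sign patterns of $\mathfrak{su}(2)$, $\mathfrak{sl}(2,\mathbb{R})$ and $\mathfrak{e}(2)$, where the eigenvalues are imaginary. Only three algebras remain:
\begin{itemize}
\item the abelian algebra ($\lambda_i=0$);
\item the Heisenberg algebra (one $\lambda_i\neq 0$);
\item $\mathfrak{sol}$ (signs $(+,-,0)$ up to permutation).
\end{itemize}
So it suffices to treat every left-invariant metric and every $E=\sum a_ie_i$ on these three algebras, with the structure constants $\lambda_i$ as free parameters.

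Next I expand the Weyl sectional curvature of $\widehat\nabla_t$ in $t$. Substituting the formula for $\widehat\nabla_t$ into $\widehat R$ and antisymmetrizing, I expect, for an orthonormal pair $X,Y$,
$$\widehat K_t(\Pi)=K(\Pi)-t\bigl(\langle\nabla_XE,X\rangle+\langle\nabla_YE,Y\rangle\bigr)-t^2\bigl(|E|^2-\varphi(X)^2-\varphi(Y)^2\bigr).$$
The $t^2$ coefficient equals minus the squared norm of the component of $E$ orthogonal to $\Pi$. It is therefore $\le 0$, and it vanishes exactly on planes containing $E$.

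The SNP property should then reduce to two conditions:
\begin{enumerate}
\item[(a)] for every plane $\Pi\ni E$, the linear coefficient $-\operatorname{tr}_\Pi(\operatorname{Sym}\nabla E)$ is $\le 0$;
\item[(b)] where that coefficient vanishes, $K(\Pi)$ has the correct sign.
\end{enumerate}
On top of this comes a uniformity requirement near such planes. A plane tilted by angle $\varepsilon$ from one containing $E$ has $t^2$ term of order $t^2\varepsilon^2$ against a linear term of order $t\varepsilon$, and I must check this balance cannot be beaten. This compactness-type argument over the Grassmannian of 2-planes, done by completing the square in $\varepsilon$, is the step I expect to be the main obstacle. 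The degenerate planes must be handled carefully, and the abelian case, where everything is flat and only the degenerate planes matter, must be excluded by the precise sign convention adopted for ``non-positive''.

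With these criteria, the rest is computation in the Milnor frame. Using the Koszul formula, $\nabla_{e_i}e_j$ is given by the constants $\mu_i=\tfrac12(\lambda_1+\lambda_2+\lambda_3)-\lambda_i$. I compute $\operatorname{Sym}\nabla E$ and Milnor's principal Ricci and sectional curvatures as polynomials in the $a_i,\lambda_i$.

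For the Heisenberg algebra, I plan to show that some plane containing $E$ violates (a) or (b). The sectional curvature of $\operatorname{Span}(e_1,e_j)$ is $\tfrac14\lambda_1^2>0$ when $\lambda_1\neq 0$ is the only nonzero constant. Meanwhile $\operatorname{Sym}\nabla E$ is traceless on the relevant planes, because $\nabla E$ is skew plus a unimodular correction. So the linear term cannot compensate the positive curvature.

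For $\mathfrak{sol}$ with $\lambda_3=0$ and $\lambda_1=-\lambda_2$, I will exhibit an explicit $E$, presumably along $e_3$ or a suitable combination, satisfying (a) and (b) with the uniform margin. This recovers the example of \cite{TW} and gives the ``if'' direction.
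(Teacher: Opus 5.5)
\textbf{The gap is in the ``if'' direction, and it cannot be closed.} If you carry your own expansion through, it shows that no left-invariant metric and no $E$ on SOL is SNP.

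By Koszul, $\langle\nabla_XE,X\rangle=\langle[X,E],X\rangle$. Unimodularity gives $\operatorname{tr}\operatorname{ad}_E=0$. So, with $N$ the unit normal of $\Pi$,
\[
\widehat K_t(\Pi)=K(\Pi)-t\,\langle \operatorname{ad}_E N,N\rangle-t^2\langle E,N\rangle^2 .
\]
For planes through $E$, the linear coefficient is controlled by the quadratic form $q_E(N)=\langle\operatorname{ad}_EN,N\rangle$ on $E^\perp$. Since $\operatorname{ad}_EE=0$, its trace on $E^\perp$ equals $\operatorname{tr}\operatorname{ad}_E=0$. So your condition (a) is not really a sign condition. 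Unless $q_E\equiv 0$ on $E^\perp$, some plane through $E$ has $q_E(N)<0$, and then $\widehat K_t\to+\infty$.

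On $\mathfrak{sol}$, take $\lambda_3=0$, $\lambda_1>0>\lambda_2$, with abelian ideal $\mathfrak n=\operatorname{span}(e_1,e_2)$.
\begin{itemize}
\item Your candidate $E=ae_3$ gives $q_E(n_1e_1+n_2e_2)=a(\lambda_2-\lambda_1)n_1n_2$, which is indefinite. For $\lambda_1=-\lambda_2$ and $a>0$, the plane spanned by $e_3$ and $(e_1-e_2)/\sqrt2$ has $\widehat K_t=-\lambda_1^2+ta\lambda_1$, which is positive once $t>\lambda_1/a$.
\item A short case check shows $q_E|_{E^\perp}\not\equiv 0$ for every $E\notin\mathfrak n$.
\item For every $E\in\mathfrak n$, the plane $\mathfrak n$ contains $E$, and its normal $e_3$ satisfies $\langle E,e_3\rangle=0=\langle[E,e_3],e_3\rangle$. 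Hence $\widehat K_t(\mathfrak n)=K(\mathfrak n)=\tfrac14(\lambda_1-\lambda_2)^2>0$ for all $t$, which violates (b).
\end{itemize}

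The sign convention does not rescue the abelian case either. Under ``$\le0$'', every $E$ on $\mathbb R^3$ is SNP, because $\widehat K_t=-t^2\langle E,N\rangle^2$. Under ``$<0$'', SOL fails as well.

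\textbf{Comparison with the paper.} The paper reaches the opposite conclusion, using exactly your candidate $E=e_3Y_3$. It does so because it evaluates $\widehat K$ only on planes spanned by Milnor vectors and by the vectors $\varphi(Y_i)$ of its automorphism family. On those planes $\langle\alpha(Y_i,E),Y_i\rangle=0$, so the linear term never appears.

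For metric (I), the plane $\operatorname{span}(X,Y_3)$ with $X=(Y_1-Y_2)/\sqrt2$ is not of that form. There Proposition~\ref{prop:hat-k} gives $\widehat K=-\tfrac1\nu+\tfrac{e_3}{\sqrt\nu}$, which is positive as soon as $e_3>1/\sqrt\nu$. When $e_3<0$, use $\operatorname{span}(Y,Y_3)$ instead.

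Your Heisenberg argument is sound: take the plane containing $E$ and the centre, where both $t$-terms vanish and $K=\tfrac14\lambda_1^2>0$. This is the same plane the paper uses. The uniformity estimate you flagged as the main obstacle is never needed.

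The net outcome of your framework is therefore a disproof of the ``if'' direction as stated, not a proof of it. No 3-dimensional unimodular completely solvable group is SNP, except the abelian one under ``$\le0$''.
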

Our approach is based on the following:  we express the SNP condition in terms of the Nomizu function and simplify the calculations using the classification of the invariant Riemannian metrics on completely solvable Lie groups up to Lie algebra isomorphisms \cite{KHL}.  We believe that this approach can be extended to  arbitrary completerly solvable Lie groups.   
  
\section{Invariant connections on homogeneous spaces of Lie groups}
We will follow more modern exposition of the classical Nomizu's theorem in \cite{DGP}. In this section $\nabla$ denotes arbitrary affine connection (not necessarily Levi-Civita or Weyl). Let $M$ be a smooth manifold with a transitive action of a Lie group $G$. For any $g\in G$ consider the left translation $\tau(g)(p)=g\cdot p$ for all $p\in $. Let $X$ denote a vector field on $M$. Then one can define vector field $\tau_g(X)$ by the formula 
$$(\tau_{g}(X))_p=(\tau_g)_*(X_{g^{-1}\cdot p}).$$
An invariant affine connection on $M$ is an affine connection $\nabla$ which satisfies 
$$\tau_g(\nabla_XY)=\nabla_{\tau_g(X)}\tau_g(Y).$$
Recall that for every affine connection $\nabla$ and an arbitrary vector field $Z$ there is a $(1,1)$-tensor field on $M$ given by
$$L_Z^{\nabla}X=[Z,X]-\nabla_ZX.$$
Since $G$ acts transitively on $M$ and $\nabla$ is $G$-invariant, we get a commutative diagram
$$
\CD
T_oM @>{L^{\nabla}_{(Ad g^{-1}A)^+}}>> T_oM\\
@A{ (\tau_{g^{-1}})_*}AA  @A{(\tau_{g^{-1})_*}}AA\\
T_oM @>>{L^{\nabla}_{A^+}}> T_oM
\endCD
$$
Let $G/H$ be a homogeneus space. We will call it reductive with a reductive decomposition $\mathfrak{g}=\mathfrak{h}\oplus\mathfrak{m}$, if $Ad(H)(\mathfrak{m})\subset\mathfrak{m}$. Note that we fix the reductive decomposition, since it is not unique and the same manifold $M$ may have different presentations as a homogeneous space $G/H$.  For the given reductive decomposition one can identify $\mathfrak{m}$ with $T_oM$. In this case  the above diagram can be rewritten as 
$$
\CD
T_oM @>{L^{\nabla}_{(Ad g^{-1}A)^+}}>> T_oM\\
@A{\pi_*}AA @A{\pi_*}AA\\
\mathfrak{m} @>>{\alpha_{\nabla}(A,-)}> \mathfrak{m}
\endCD
$$
This formula shows that $\nabla$ is recovered from the bilinear $Ad(H)$-equivariant map 
$$\alpha_{\nabla}:\mathfrak{m}\times\mathfrak{m}\rightarrow \mathfrak{m}.$$
For more details we refer to \cite{DGP}.
Finally we will formulate the above considerations as follows.
\begin{theorem}[Nomizu, \cite{N}] Let $G/H$ be a reductive homogeneous space with a fixed reductive decomposition $\mathfrak{g}=\mathfrak{h}\oplus\mathfrak{m}$. There is a bijective correspondence between the set of (left) $G$-invariant connections $\nabla$ on $G/H$ and the vector space of $Ad(H)$-equivariant  bilinear maps 
$$\alpha:\mathfrak{m}\times\mathfrak{m}\rightarrow\mathfrak{m}.$$
\end{theorem}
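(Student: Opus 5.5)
Since $\mathfrak{h}$ is complementary to $\mathfrak{m}$ and $Ad(H)$ preserves $\mathfrak{m}$, every vector field on $G/H$ can be locally expressed through the fundamental fields $A^+$, $A\in\mathfrak{g}$. The plan is to build the correspondence in both directions explicitly and show the two constructions are mutually inverse. For the direction connection $\to\alpha$, start from a $G$-invariant $\nabla$ and set
$$\alpha_\nabla(X,Y) = -\bigl(L^{\nabla}_{X^+}Y\bigr)_o,\qquad X,Y\in\mathfrak{m}\cong T_oM,$$
identifying $Y$ with $\pi_*Y$. This is well defined because $L^\nabla_Z$ is a $(1,1)$-tensor: it is $C^\infty$-linear in its argument, since $[Z,fX]-\nabla_Z(fX)=f([Z,X]-\nabla_ZX)$. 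Bilinearity in $X$ holds because $A\mapsto A^+$ is linear. The first commutative diagram gives $Ad(H)$-equivariance: for $h\in H$, $\tau_h$ fixes $o$ and acts on $T_oM\cong\mathfrak{m}$ by $Ad(h)$. Invariance of $\nabla$ gives $(\tau_h)_*\circ L^\nabla_{A^+}\circ(\tau_h)_*^{-1}=L^\nabla_{(Ad h\,A)^+}$ at $o$, so $\alpha(Ad_h X,Ad_h Y)=Ad_h\,\alpha(X,Y)$. The sign convention is immaterial for bijectivity.

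For the converse, given an $Ad(H)$-equivariant $\alpha$, define $\nabla$ at $o$ by prescribing $(\nabla_{X^+}Y)_o=[X^+,Y]_o+\alpha(X,Y_o)$ for $X\in\mathfrak{m}$. Then transport this to every point $g\cdot o$ using $\tau_g$, i.e. impose $\tau_g(\nabla_XY)=\nabla_{\tau_gX}\tau_gY$. The key step, and the one needing the most care, is well-definedness: if $g\cdot o=g'\cdot o$, then $g'=gh$ with $h\in H$. The two prescriptions then differ by conjugation with $(\tau_h)_*=Ad(h)|_{\mathfrak{m}}$, and these agree precisely because $\alpha$ is $Ad(H)$-equivariant and $Ad(H)\mathfrak{m}\subset\mathfrak{m}$. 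Here reductivity is essential; without it $\mathfrak{m}$ would not be an $H$-module.

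It remains to check that the resulting operator is a genuine connection. It is tensorial in the lower slot and satisfies Leibniz in $Y$ because $[X^+,fY]=f[X^+,Y]+(X^+f)Y$ while $\alpha$ is tensorial. Smoothness follows by using a local section $s$ of $G\to G/H$ to write $\nabla$ in the trivialization. Invariance holds by construction, and since $\tau_g$ preserves the relevant brackets, $\tau_g$-equivariance is consistent with the Lie derivative term.

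Finally, the two constructions are inverse. For $\alpha\mapsto\nabla\mapsto\alpha_\nabla$ this is immediate at $o$. In the other direction, two invariant connections with the same $\alpha$ agree at $o$ on all $X\in\mathfrak{m}$, hence agree at $o$, since $\mathfrak{m}$ spans $T_oM$. By invariance they then agree everywhere. The main obstacle I expect is the bookkeeping in the well-definedness step and correctly handling the fact that $X^+$ for $X\in\mathfrak{m}$ is not itself $G$-invariant; this is why one works with $L^\nabla$, a tensor, rather than with $\nabla_{X^+}$ directly. I would rely on the second commutative diagram to make this transparent.
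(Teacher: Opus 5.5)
Your proposal is correct and follows the same route as the paper. The paper defines $\alpha_\nabla$ from the tensor $L^\nabla_{A^+}$ at $o$ and reads off both $Ad(H)$-equivariance and the recovery of $\nabla$ from the invariance diagram. Your sign $\alpha_\nabla=-(L^\nabla_{X^+})_o$ is in fact the one that matches the paper's later use of $(\nabla_UV)_o=\alpha(U,V)$ for left-invariant fields.

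The only real difference is that the paper leaves the converse construction to the cited references, whereas you carry it out. You also correctly locate where reductivity is used: $(\tau_h)_*X^+=(Ad(h)X)^+$ with $Ad(h)X\in\mathfrak{m}$ is again one of the fields used in the prescription at $o$.
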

\noindent In the sequel we will call $\alpha$ the {\it Nomizu function}.

\section{Weyl curvature of left-invariant Weyl connections}
Let $(G,\langle -,-\rangle)$ be a Lie group endowed with an invariant Riemannian metric.
\begin{proposition}[\cite{KN}, Chapter X, Theorem 3.3]\label{prop:KN-LC} The Nomizu function $\alpha$ of the Levi-Civita connection on $(G,\langle,-,\rangle)$ is given by the formula
$$\alpha(X,Y)={1\over 2}[X,Y]+U(X,Y)$$
where $U(X,Y)$ is determined by
$$2\langle U(X,Y),Z\rangle=\langle X,[Z,Y]\rangle+\langle [Z,X],Y\rangle.$$
\end{proposition}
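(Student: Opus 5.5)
The plan is to compute $\alpha(X,Y)$ directly from the two defining properties of the Levi-Civita connection, namely torsion-freeness and metric compatibility, restricted to left-invariant vector fields. Here $G/H=G$, $\mathfrak{h}=0$ and $\mathfrak{m}=\mathfrak{g}$. For left-invariant fields the tensor $L^{\nabla}_X Y=[X,Y]-\nabla_XY$ is evaluated at $e$. Note that $A^+$ in Nomizu's setup is the fundamental field generated by the left action, which on $G$ is a right-invariant field. The first step is to fix the convention in which $\alpha(X,Y)=\nabla_XY$ for left-invariant $X,Y$, equivalently $-L_X$ up to the sign conventions between right-invariant fields and $\mathfrak{g}$. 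I will follow \cite{KN}, where $\alpha(X,Y)=(\nabla_{X}Y)_e$ for $X,Y$ left-invariant, so that $\alpha$ is simply the bilinear map $\mathfrak{g}\times\mathfrak{g}\to\mathfrak{g}$ describing $\nabla$ on left-invariant fields.

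Next, decompose $\alpha$ into its skew and symmetric parts. Torsion-freeness for left-invariant fields gives $\alpha(X,Y)-\alpha(Y,X)=[X,Y]$. Hence $\alpha(X,Y)=\tfrac12[X,Y]+U(X,Y)$ with $U$ symmetric.

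Then impose metric compatibility. For left-invariant $X,Y,Z$ the function $\langle Y,Z\rangle$ is constant, so
$$0=\langle \alpha(X,Y),Z\rangle+\langle Y,\alpha(X,Z)\rangle .$$
Substituting the decomposition gives
$$\langle U(X,Y),Z\rangle+\langle U(X,Z),Y\rangle=-\tfrac12\big(\langle [X,Y],Z\rangle+\langle [X,Z],Y\rangle\big).$$
The symmetric trilinear-type form $u(X,Y,Z)=\langle U(X,Y),Z\rangle$ is symmetric in its first two slots. A Koszul-style cyclic combination (add the identity for $(X,Y,Z)$ and $(Y,X,Z)$, subtract the one for $(Z,X,Y)$) isolates $2\langle U(X,Y),Z\rangle$. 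After simplifying with antisymmetry of the bracket, this should yield exactly $\langle X,[Z,Y]\rangle+\langle [Z,X],Y\rangle$. Equivalently, one can verify directly that the $U$ defined by the stated formula is symmetric in $X,Y$ and satisfies the compatibility identity. Uniqueness of the Levi-Civita connection (or injectivity of the linear system for symmetric $U$) then gives the claim.

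The only real obstacle is bookkeeping: matching the sign conventions of the Nomizu correspondence (right-invariant fundamental fields versus left-invariant fields, and $L^\nabla$ versus $\nabla$) with the formula as stated. I will settle this first by checking a bi-invariant example, where $U=0$ and $\alpha=\tfrac12[X,Y]$ must hold. After that, the Koszul computation is routine.
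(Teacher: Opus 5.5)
Your proof is correct and follows the standard route. The paper gives no proof of its own and only cites Kobayashi--Nomizu. There the formula comes out exactly as in your argument: torsion-freeness fixes the skew part $\tfrac12[X,Y]$, metric compatibility gives the linear identity for the symmetric part $U$, and that identity determines $U$ (equivalently, one verifies the formula and invokes uniqueness of the Levi-Civita connection). Your cyclic combination does yield $2\langle U(X,Y),Z\rangle=\langle [Z,X],Y\rangle+\langle X,[Z,Y]\rangle$. Your convention $\alpha(X,Y)=(\nabla_XY)_e$ for left-invariant fields is the one the paper actually uses when it writes $(\nabla_UV)_o=\alpha(U,V)$ in the Weyl-curvature formula, so the bi-invariant sanity check is optional.
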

\begin{proposition}\label{prop:hat-k} Let $(G,\langle -,-\rangle)$ be a Lie group endowed with a left-invariant Weyl connection determined by $E\in\mathfrak{g}$. The Weyl curvature of this connection is given by the formula
 \begin{equation}\widehat{K}(X,Y) =$$
$$ K(X,Y)  + \langle {Y},{E}\rangle^2 + \langle {X},{E}\rangle^2 - \langle{E},{E}\rangle - \langle{Y},{\alpha(Y,E)}\rangle  - \langle{\alpha(X,E)},{X}\rangle,
\end{equation}
for any unit orthogonal $X,Y$.
\end{proposition}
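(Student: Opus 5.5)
The plan is to compare $\widehat{\nabla}$ with the Levi-Civita connection $\nabla$ through the difference tensor
$$B(X,Y)=\widehat{\nabla}_XY-\nabla_XY=\varphi(Y)X+\varphi(X)Y-\langle X,Y\rangle E,$$
expand $\widehat{R}$ in terms of $R$ and $B$, and then evaluate the antisymmetric part on an orthonormal pair. Since the statement is for arbitrary unit orthogonal $X,Y$, no normalisation argument is needed. For any connection of the form $\nabla+B$ one has
$$\widehat{R}(X,Y)Z=R(X,Y)Z+(\nabla_XB)(Y,Z)-(\nabla_YB)(X,Z)+B(X,B(Y,Z))-B(Y,B(X,Z)).$$
The antisymmetric part of the endomorphism $\widehat{R}(X,Y)$ is $\tfrac12(\widehat{R}(X,Y)-\widehat{R}(X,Y)^*)$. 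Hence
$$\widehat{K}(X,Y)=\tfrac12\bigl(\langle\widehat{R}(X,Y)Y,X\rangle-\langle\widehat{R}(X,Y)X,Y\rangle\bigr).$$
The $R$-part gives $\tfrac12(K+K)=K(X,Y)$, because $R(X,Y)$ is already skew.

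For the derivative terms I use that $\nabla$ is metric, so
$$(\nabla_XB)(Y,Z)=(\nabla_X\varphi)(Z)\,Y+(\nabla_X\varphi)(Y)\,Z-\langle Y,Z\rangle\nabla_XE,\qquad (\nabla_X\varphi)(Z)=\langle\nabla_XE,Z\rangle.$$
Pairing with $X$ or $Y$ and using $X\perp Y$, $|X|=|Y|=1$, I expect:
\begin{itemize}
\item $\langle(\nabla_XB)(Y,Y),X\rangle=-\langle\nabla_XE,X\rangle$ and $\langle(\nabla_YB)(X,Y),X\rangle=\langle\nabla_YE,Y\rangle$;
\item symmetrically, $\langle(\nabla_XB)(Y,X),Y\rangle=\langle\nabla_XE,X\rangle$ and $\langle(\nabla_YB)(X,X),Y\rangle=-\langle\nabla_YE,Y\rangle$.
\end{itemize}
Both halves then contribute $-\langle\nabla_XE,X\rangle-\langle\nabla_YE,Y\rangle$, so after averaging the derivative contribution is exactly this quantity.

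For left-invariant fields $X,E$ on $G$ we have $\nabla_XE=\alpha(X,E)$ by Proposition \ref{prop:KN-LC}. This turns the derivative contribution into the terms $-\langle\alpha(X,E),X\rangle-\langle Y,\alpha(Y,E)\rangle$ of the formula.

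It remains to handle the quadratic terms $B(X,B(Y,Y))-B(Y,B(X,Y))$ and their counterparts with $Y,X$ swapped in the last slot. For this I substitute
$$B(Y,Y)=2\varphi(Y)Y-E,\qquad B(X,Y)=\varphi(Y)X+\varphi(X)Y,\qquad B(X,X)=2\varphi(X)X-E,$$
apply $B$ once more, and pair with $X$ (respectively $Y$). Terms like $\varphi(E)=\langle E,E\rangle$ and $\varphi(X)^2$, $\varphi(Y)^2$ will appear. After taking the half-difference they should collapse to $\varphi(X)^2+\varphi(Y)^2-\langle E,E\rangle$.

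This bookkeeping is the main obstacle. Each of the two pairings produces several cross terms in $\varphi(X)\varphi(Y)$, and one must check that these cancel precisely in the antisymmetrisation while the $\langle E,E\rangle$ terms add up with coefficient $-1$. I would carry out the computation once in full for $\langle\widehat{R}(X,Y)Y,X\rangle$ and obtain the other pairing by the symmetry $X\leftrightarrow Y$ together with the skew-symmetry of $\widehat{R}(X,Y)$ in $X,Y$. Collecting the three groups of terms yields the stated formula for $\widehat{K}(X,Y)$.
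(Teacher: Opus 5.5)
Your argument is correct, but it takes a different route from the paper. The paper does no curvature computation of its own. It quotes the closed formula for $\widehat R_a(X,Y)Z$ from [W, Proposition 3.2], sets $Z=Y$, pairs with $X$ using $|X|=|Y|=1$ and $X\perp Y$, and replaces $\nabla_XE$ by $\alpha(X,E)$.

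You instead rebuild the needed component of that formula from the difference-tensor identity for $\nabla+B$ and the metric compatibility of $\nabla$, and you carry out the antisymmetrisation explicitly. Your pairings check out. The derivative parts of the two pairings are $\mp(\langle\nabla_XE,X\rangle+\langle\nabla_YE,Y\rangle)$, and the quadratic parts are $\pm(\varphi(X)^2+\varphi(Y)^2-\langle E,E\rangle)$.

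The step you flag as the main obstacle is actually short. Since $B(X,E)=\langle E,E\rangle X$, and every $\varphi(X)\varphi(Y)$ term already vanishes inside each pairing because $X\perp Y$, the two pairings are exact negatives of each other. This reflects $\widehat\nabla g=-2\varphi\otimes g$: $\widehat R(X,Y)$ is a skew endomorphism plus a multiple of the identity, and the identity part pairs to zero on orthogonal vectors. So for orthonormal $X,Y$ you could skip the antisymmetrisation altogether and just compute $\langle\widehat R(X,Y)Y,X\rangle$.

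The paper's route is a one-line substitution, but it rests on the external formula and on that source's convention for the antisymmetric part. Your route is self-contained and shows why the antisymmetrisation is harmless here; run with a general third argument $Z$, it reproduces [W, Proposition 3.2] itself. The final identification $\nabla_XE=\alpha(X,E)$ for left-invariant fields is the same in both.
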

\begin{proof} Use the following formula for the Weyl curvature \cite{W} (Proposition 3.2):
$$\widehat{R}_a(X,Y)Z=R(X,Y)Z+\langle Z,E\rangle(\langle Y,E\rangle X-\langle X,E\rangle Y)$$
$$+(\langle Z,Y\rangle \langle X,E\rangle-\langle Z,X\rangle\langle Y,E\rangle)E+E^2(\langle Z,X\rangle Y-\langle Z,Y\rangle X)$$
$$+\langle Z,\nabla_XE\rangle Y-\langle Z,\nabla_YE\rangle X+\langle Z,X\rangle\nabla_YE-\langle Z,Y\rangle\nabla_XE$$ 
  together with assumptions $Z=Y$, $\langle X,X\rangle=\langle Y,Y\rangle=1$, $\langle X,Y\rangle=0$ and the formula $(\nabla_UV)_o=\alpha(U,V)$.
\end{proof}

\section{Unimodular 3-dimensional solvable Lie groups with SNP Weyl connections}

The purpose of this section is to prove Theorem \ref{thm:main}.
\subsection{Left-invariant Riemannian metrics on completely solvable Lie groups}
\begin{definition}[\cite{A}]\label{thm:aleks} {\rm  A Lie algebra $\mathfrak{g}$ together with a fixed scalar product $\langle-,-\rangle$ is called a metric Lie algebra. Two metric Lie algebras $\mathfrak{g}$ and $\mathfrak{g}'$ are called isomorphic if there exists an isometry  $\varphi:\mathfrak{g}\rightarrow\mathfrak{g}'$ which  is simultaneously  an isomorphism of Lie algebras.} 
\end{definition}
As an example we consider solvable Lie algebras which have the following property: for any $X\in\mathfrak{g}$ the operator $\operatorname{ad}_X$ has only real eigenvalues.  
\begin{theorem}[\cite{A}]\label{thm:aleks} Isometric completely solvable metric Lie algebras are isomorphic.
\end{theorem}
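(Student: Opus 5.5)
The plan is to realize both metric Lie algebras inside the isometry group of one Riemannian manifold and to show that the two simply transitive groups are conjugate there by an isometry fixing a point. Let $\mathfrak{g},\mathfrak{g}'$ be completely solvable metric Lie algebras, and let $G,G'$ be the corresponding simply connected groups with left-invariant metrics, assumed isometric. Fix an isometry $f:G'\to G$ and set $M=G$ with base point $p=e$. Then $G$ (by left translations) and $f G' f^{-1}$ both act simply transitively by isometries on $M$. So we get two connected, simply transitive, completely solvable subgroups of $L=I_0(M)$, which is a Lie group by Myers--Steenrod. It suffices to find $\psi\in L$ with $\psi(p)=p$ and $\psi (fG'f^{-1})\psi^{-1}=G$.

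Granting such $\psi$, the conclusion follows. Put $\Phi(g')=\psi f g' f^{-1}\psi^{-1}$. This is a Lie group isomorphism $G'\to G$. Under the orbit maps $g\mapsto g\cdot p$ and $g'\mapsto f g' f^{-1}\cdot p$, its differential at the identity corresponds to $d\psi_p$. Indeed, $\psi f g' f^{-1}\psi^{-1}(p)=\psi(f g' f^{-1}(p))$ because $\psi$ fixes $p$. Both orbit maps identify the Lie algebras isometrically with $T_pM$, since the metrics are the left-invariant ones. Hence $d\Phi_e$ is an isometric Lie algebra isomorphism $\mathfrak{g}'\to\mathfrak{g}$.

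To produce $\psi$, I would argue in three steps.

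\emph{Step 1 (conjugacy).} Let $T$ be a maximal connected completely solvable (triangular) subgroup of $L$ containing $G$. Using Mostow's theorem on the conjugacy of maximal connected triangular subgroups of a Lie group, $fG'f^{-1}$ lies in some conjugate $hTh^{-1}$ with $h\in L$. In practice one applies the theorem to the real algebraic hull of $\operatorname{Ad}(L)$ and lifts back, noting that the kernel of $\operatorname{Ad}$ is central.

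\emph{Step 2 (maximality).} I claim $T=G$. Since $G\subset T$ acts transitively, $T$ acts transitively with isotropy $T_p$. This isotropy is compact, because $T_p\subset I(M)_p$ is compact, being a closed subgroup of $O(T_pM)$. A compact connected subgroup of a completely solvable group is trivial: its Lie algebra consists of elements $X$ with $\operatorname{ad}_X$ both skew-semisimple and with real spectrum, hence nilpotent and semisimple, so $\operatorname{ad}_X=0$. The group is then abelian compact, i.e.\ a torus, and a completely solvable simply connected group contains no nontrivial torus. Hence $(T_p)_0$ is trivial and $\dim T=\dim M=\dim G$. As $T$ is connected and contains $G$, we get $T=G$. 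The same argument gives $hTh^{-1}=fG'f^{-1}$.

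\emph{Step 3 (fixing the point).} So $fG'f^{-1}=hGh^{-1}$. Choose $g\in G$ with $h g(p)=p$, and set $\psi=(hg)^{-1}$. Then $\psi(p)=p$ and $\psi fG'f^{-1}\psi^{-1}=g^{-1}Gg=G$, as required.

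The main obstacle is Step 1, the conjugacy of maximal connected completely solvable subgroups inside $I_0(M)$. This group need not be algebraic, and Mostow's theorem must be applied carefully: either through the algebraic hull of the adjoint group, or through a Levi decomposition of $\mathfrak{l}$ combined with the conjugacy of Iwasawa-type split solvable subalgebras in the Levi factor. I would first try the algebraic-hull route, checking that complete solvability (real spectra) is preserved under passage to the hull, which is where the hypothesis that $\operatorname{ad}_X$ has only real eigenvalues is essential.
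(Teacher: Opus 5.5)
Your outer reduction is sound. Given $\psi\in I_0(M)$ fixing $p$ with $\psi fG'f^{-1}\psi^{-1}=G$, Step 3 and your closing argument do produce an isometric isomorphism; first normalize $f(e')=e$, so that the orbit map of $G'$ is $f$ itself. The paper gives no proof of this statement and only cites it, so there is no internal argument to compare with. However, the existence of such a $\psi$ is essentially equivalent to the theorem, and all of it rests on Step 1, which is not established.

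\emph{The core problem: Step 1 conflates two notions.} If ``triangular'' means the intrinsic notion (connected subgroups whose own Lie algebra has real $\operatorname{ad}$-spectrum), maximal members need not be conjugate. In the Lie algebra of $\widetilde{E(2)}_0$ with the paper's basis, the only subalgebras containing $Z$ are $\mathbb{R}Z$ and the whole algebra. So $\exp(\mathbb{R}Z)$ and $\exp(\operatorname{span}(X,Y))$ are both maximal connected completely solvable subgroups, of different dimensions. Conjugacy theorems of Mostow or Borel--Tits type instead concern subgroups $T$ with $\operatorname{Ad}_L(T)$ triangular. That means $\operatorname{ad}_{\mathfrak l}X$ must have real spectrum on all of $\mathfrak l$, in particular on $\mathfrak l/\mathfrak g$.

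\emph{The missing lemma.} You need to show that $G$ and $fG'f^{-1}$ are triangular relative to $L$. This does not follow formally from complete solvability plus complementarity to a compact subalgebra. Take a completely solvable $\mathfrak g_0$ and a nonzero homomorphism $\phi:\mathfrak g_0\to\mathbb R$. In $\mathfrak g_0\oplus\mathfrak{so}(3)$, the graph $\{X+\phi(X)R\}$ is isomorphic to $\mathfrak g_0$ and complementary to $\mathfrak{so}(3)$. Yet it acts on $\mathfrak{so}(3)$ with eigenvalues $\pm i\phi(X)$. So any proof must use effectiveness of the isotropy, which your proposal never does. Your suggested check, that real spectra survive passage to the algebraic hull, presupposes exactly this relative statement.

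\emph{A further gap even granting the lemma.} Conjugacy inside the algebraic hull of $\operatorname{Ad}(L)$ yields a conjugating element of the hull, not of $\operatorname{Ad}(L)$. You would still have to move it back into $L$.

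Filling these holes amounts to proving that simply transitive completely solvable subgroups of $I_0(M)$ are conjugate. That is the real content of the result the paper cites, so as written the proposal reduces the theorem to an equivalent statement rather than proving it.

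\emph{A smaller point in Step 2.} $T$ is not known to be simply connected, so the ``no tori'' remark does not apply as stated. Argue instead that $\mathfrak t\cap\mathfrak k$ is central in $\mathfrak t\supseteq\mathfrak g$. Its flow then commutes with the transitive $G$ and fixes $p$, so it fixes every point of $M$ and is trivial by effectiveness.
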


\subsection{Classification of left-invariant Riemannian metris and sectional curvaures on 3-dimensional Lie groups}
We explain the classification of left-invariant metrics on Lie groups up to isomorphism. Let $G$ be a simply connected Lie group and $l_g: G\rightarrow G$ the left translation determined by $g$. A left invariant Riemannian metric on $G$ yields a scalar product on the Lie algebra $\mathfrak{g}$. Conversely, if $\langle -,-\rangle$ is a scalar product on $\mathfrak{g}$, the formula
$$\langle v,e\rangle_g=\langle (l_{g^{-1}})_*v,(l_{g^{-1}})_*w\rangle_e$$ 
defines a left-invariant Riemannian metric on $G$. The group $\operatorname{Aut}(\mathfrak{g})$ acts on the set $\mathcal{M}$ of all scalar products on $\mathfrak{g}$ by the formula
$$\varphi\cdot\langle v,w\rangle=\langle\varphi^{-1}v,\varphi^{-1}w\rangle.$$
Note that if one defines left-invariant Riemannian metrics on $G$ by the scalar products $\langle -,-\rangle$ and $\varphi\cdot\langle -,-\rangle$  lying in the same $\operatorname{Aut}(\mathfrak{g})$-orbit, then $(G,\langle -,-\rangle)$ and $(G,\varphi\cdot\langle v,w\rangle)$ will be isometric. Thus, a description of the space $\mathcal{M}/\operatorname{Aut}(\mathfrak{g})$ is a classification of left-invariant metrics on simply connected Lie groups {\it up to isomorphism}. In general not every isometry of $G$ is determined by an automorphism of $\mathfrak{g}$, thus this classification is finer then the classification up to isometry. However, it is  sufficient for our purposes. In our calculations we use this clasification in the form \cite{KHL}. Here is the sum up of this description. Let $[g]=(g_{ij})$ and $[g']=[g'_{ij}]$ be positive definite matrices representing the scalar products on $\mathfrak{g}$. We write $[g]\simeq [g']$ if 
$$[g']=[\varphi]^t[g][\varphi].$$
In this form, the classification of left-invariant Riemannian metrics on $G$ amounts to finding matrices representing a scalar product in each $\operatorname{Aut}(\mathfrak{g})$-orbit. Let us write down these matrices in the case of solvable unimodular 3-dimensional Lie groups.
It is well known that such groups are
\begin{enumerate}
\item abelian $\mathbb{R}^3$ (which is clear and not interesting from our point of view),
\item the Heisenberg group NIL whose Lie algebra is given by the Lie bracket formulas for the base $\{X,Y,Z\}$
$$[X,Y]=Z,[Z,X]=[Z,Y]=0.$$
\item the solvable group SOL, whose Lie algebra is given by
$$[X,Y]=0,[Z,X]=X,[Z,Y]=-Y$$
\item the universal cover  $\widetilde{E(2)}_0$ of the Lie group of proper motions of the euclidean plane  with the Lie algebra
$$[X,Y]=0,[Z,X]=-Y,[Z,Y]=X.$$

\end{enumerate}
The matrices representing classes $[g]$ are
\begin{enumerate}
\item the Heisenberg group NIL
$$
\begin{pmatrix}
\lambda & 0 & 0\\
0 & \lambda & 0\\
0 & 0 &1
\end{pmatrix}, \lambda>0
$$
\item the group SOL
$$
\begin{pmatrix}
1 & 0 & 0\\
0 & 1 & 0\\
0 & 0 & \nu
\end{pmatrix}, 
\begin{pmatrix}
1 & 1 & 0\\
1 & \mu & 0\\
0 & 0 & \nu
\end{pmatrix}, \mu>1, \nu>0
$$
\item the solvable Lie group $\tilde{E}_0(2)$
$$
\begin{pmatrix}
1 & 0  & 0\\
0 & \mu & 0\\
0 & 0 & \nu
\end{pmatrix}, 0<\mu \leq 1,\nu >0.
$$
 
\end{enumerate} 
In our approach we also need an explicit classification of $\operatorname{Aut}(\mathfrak{g})$ from \cite{KHL}.
\begin{enumerate}
\item For the Heisenberg algebra $\mathfrak{g}$ the group $\operatorname{Aut}(\mathfrak{g})$ is isomorphic to a matrix group
consiting of matrices
$$
\begin{pmatrix}
a & c & 0\\
b & d & 0\\
* & * & ad-bc\\
\end{pmatrix}, a,b,c,d\in\mathbb{R},ad-bc\not=0.
$$
\item For the Lie algebra SOL the group $\operatorname{Aut}(\mathfrak{g})$ is isomorphic to the group consisting of matrices
$$
S=\begin{pmatrix}
x_1 & 0 & x_3\\
0 &  y_2 & y_3\\
0 & 0 & 1\\
\end{pmatrix}, x_1,x_3,y_2,y_3\in\mathbb{R},\,\text{or}\, 
S\cdot
\begin{pmatrix}
0 & 1 & 0\\
1 & 0 & 0\\
0 & 0 & -1\\
\end{pmatrix}
$$
\item For the unimodular solvable Lie algebra $\mathfrak{g}=\mathbb{R}^2\rtimes\mathfrak{so}(2)$ the automorphism group $\operatorname{Aut}(\mathfrak{g})$ consists of matrices
$$
\begin{pmatrix}
a & b & \gamma\\
-b & a & \delta\\
0 & 0 & 1\\
\end{pmatrix}, (a,b)\not=(0,0), a,b, \gamma,\delta\in\mathbb{R}.
$$
\end{enumerate}
  
\subsection{Calculation of Ricci curvatures and sectional curvatures in the 3-dimensional case}

 By \cite{M}, Theorem 4.3,  on every 3-dimensional unimodular Lie group, an orthonormal basis $\{Y_1,Y_2,Y_3\}$ with \begin{align*}
    \begin{matrix}
        [Y_2,Y_3] = a_1Y_1, & [Y_3,Y_1] = a_2Y_2,& [Y_1,Y_2] = a_3Y_3
    \end{matrix}
\end{align*}
can be found. We will call such basis a \textit{Milnor basis}. In such basis, the Ricci tensor is diagonal: indeed, in unimodular situation,
\begin{align*}
    \text{Ric}(X,Y) = &-\dfrac{1}{2}\sum_{k}\langle [X,X_k],[Y,X_k] \rangle \\
    &+ \dfrac{1}{4}\sum_{k,l}\langle[X_k,X_l],X\rangle\langle[X_k,X_l],Y\rangle - \dfrac{1}{2}B(X,Y).
\end{align*}
Here $B(X,Y)$ is the Killing form. Let's conduct the calculations for $\text{Ric}(Y_1,Y_2)$. The first term vanishes: for $k=1,2$ it follows from anti-symmetry of the Lie bracket, and for $k=3$ we obtain \begin{align*}
    \langle [Y_1,Y_3],[Y_2,Y_3]\rangle = \langle -a_2Y_2,a_1Y_1\rangle = 0.
\end{align*} For the second term, notice that each bracket $[Y_k,Y_l]$ is a multiple of exactly one basis vector. In particular, at most one of the factors $\langle[Y_k,Y_l],Y_1\rangle$, $\langle[Y_k,Y_l],Y_2\rangle$ is nonzero. \\~\\
Similarly, we show that $B(Y_k,Y_l) = 0$ for $k \neq l$. Consider $B(Y_1,Y_2)$. In order to compute it, we calculate $\text{ad}_{Y_2}Y_1 = -a_3Y_3$, so  $\text{ad}_{Y_1}(-a_3Y_3) = a_2a_3Y_2$. Therefore $B(Y_1,Y_2) = \text{tr}(\text{ad}_Y \circ \text{ad}_Y) = 0$.\\~\\
The other cases follow by taking any cyclic permutation of the basic vector fields. Now we just need to compute the diagonal entries $\text{Ric}(Y_i,Y_i)$; again, we can just compute $\text{Ric}(Y_1,Y_1)$ as the other cases will follow by taking a cyclic permutation. \\~\\
We will use the same formula as before, which in this case simplifies to $$\text{Ric}(Y_1,Y_1) = -\dfrac{1}{2}\sum_{k}||[Y_1,Y_k]||^2 + \dfrac{1}{4}\sum_{k,l}\langle[Y_k,Y_l],Y_1\rangle^2 - \dfrac{1}{2}B(Y_1,Y_1).$$
For the first term, the only contributing factors are $[Y_1,Y_2] = a_3Y_3$ and $[Y_1,Y_3] = a_2Y_2$. It therefore evaluates to $-\dfrac{1}{2}\left(a_2^2 + a_3^2\right)$. \\~\\ For the second term, only the brackets $[Y_2,Y_3] = a_1Y_1$ and $[Y_3,Y_2] = -a_1Y_1$ are going to contribute; each one contributes with $a_1^2$. Therefore, the second term evaluates to $\dfrac{1}{4}\left(a_1^2+a_1^2\right) = \dfrac{1}{2}a_1^2$. \\~\\
Finally, $B(Y_1,Y_1) = -2a_2a_3$, so the third term evaluates to $\dfrac{1}{2}a_2a_3$. Combining them together, we obtain that \begin{align*}
    \text{Ric}(Y_1,Y_1) &= -\dfrac{1}{2}(a_2^2 + a_3^2) + \dfrac{1}{2}a_1^2 + \dfrac{1}{2}a_2a_3 \\ 
    &= \dfrac{1}{2}(a_1^2 - a_2^2 + 2a_2a_3 - a_3^2) = \dfrac{1}{2}(a_1^2 - (a_2-a_3)^2).
\end{align*} 
By cyclic permutations $(i,j,k)$, we obtain the formula for the principal Ricci curvatures
$$\rho_i = \text{Ric}(Y_i,Y_i) = \dfrac{1}{2}(a_i^2 - (a_j - a_k)^2).$$
Denoting the principal Ricci curvatures as $\rho_1,\rho_2,\rho_3$, one may compute the sectional curvatures $K(Y_i,Y_j)$ as \begin{align*}
    K(Y_i,Y_j) = \dfrac{\rho_i + \rho_j - \rho_k}{2}.
\end{align*}
Denote $K(Y_i,Y_j)$ as $K_{ij}$, so that \begin{align*}
    \rho_i &= K_{ij}+ K_{ik},\\
    \rho_j &= K_{ij} + K_{jk}, \\
    \rho_k &= K_{ik} + K_{jk}.
\end{align*}
Thus $\rho_i + \rho_j - \rho_k = 2K_{ij}$ and the formula follows immediately.

\subsection{Invariant Riemannian metric (I) for SOL}
\subsubsection{Sectional curvatures}
Let $\{X,Y,Z\}$ be the basis of the Lie algebra of SOL as in the classification found in \cite{KHL}, so \begin{align*}
    [X, Y] = 0, \qquad [Z,X] = X, \qquad [Z, Y] = -Y.
\end{align*}
By taking $$Y_1 = \dfrac{1}{\sqrt{2}}(X+Y),\qquad Y_2 = -\dfrac{1}{\sqrt{2}}(X-Y), \qquad Y_3 = \dfrac{1}{\sqrt{\nu}}Z, $$
we obtain an orthonormal Milnor base with $$[Y_2,Y_3] = \dfrac{1}{\sqrt{\nu}} Y_1,\; [Y_3,Y_1] = -\dfrac{1}{\sqrt{\nu}}Y_2,\; [Y_1,Y_2] = 0.$$
Therefore, the principal Ricci curvatures are \begin{align*}
    \rho_1 &= \dfrac{1}{2}\left(\dfrac{1}{\nu}-\dfrac{1}{\nu}\right) = 0, \\
    \rho_2 &= \dfrac{1}{2}\left(\dfrac{1}{\nu} - \dfrac{1}{\nu}\right) = 0, \\
    \rho_3 &= \dfrac{1}{2} \left(\dfrac{1}{\sqrt{\nu} } + \dfrac{1}{\sqrt{\nu}}\right)^2 = -\dfrac{2}{\nu}.
\end{align*}
Hence, the sectional curvatures $K(Y_i,Y_j)$ are given by \begin{align*}
    K(Y_1,Y_2) &=  \dfrac{1}{\nu}, \\
    K(Y_2,Y_3) &= -\dfrac{1}{\nu}, \\
    K(Y_3,Y_1) &= -\dfrac{1}{\nu}.
\end{align*}

\subsubsection{Weyl sectional curvature}

We  compute the terms $U(Y_i,Y_j)$ using Proposition \ref{prop:KN-LC}. For $U(Y_1,Y_2)$, the components along $Y_1$ and $Y_2$ vanish. The only possibly nonzero component is
\begin{align*}
    2\left\langle U(Y_1,Y_2),Y_3 \right\rangle
    &=\left\langle Y_1,[Y_3,Y_2] \right\rangle+\left\langle [Y_3,Y_1],Y_2 \right\rangle \\
    &=\left\langle Y_1,-\dfrac{1}{\nu}Y_1 \right\rangle+\left\langle -\dfrac{1}{\nu}Y_2,Y_2 \right\rangle
    =-\dfrac{2}{\nu}.
\end{align*}
Hence
\begin{align*}
    U(Y_1,Y_2)=-\dfrac{1}{\nu}Y_3.
\end{align*}
Similarly,
\begin{align*}
    2\left\langle U(Y_1,Y_3),Y_2 \right\rangle
    &=\left\langle Y_1,[Y_2,Y_3] \right\rangle+\left\langle [Y_2,Y_1],Y_3 \right\rangle=\dfrac{1}{\nu}, \\
    2\left\langle U(Y_2,Y_3),Y_1 \right\rangle
    &=\left\langle Y_2,[Y_1,Y_3] \right\rangle+\left\langle [Y_1,Y_2],Y_3 \right\rangle=\dfrac{1}{\nu}.
\end{align*}
Therefore
\begin{align*}
    U(Y_1,Y_2)=-\dfrac{1}{\nu}Y_3,
    \qquad
    U(Y_1,Y_3)=\frac{1}{2\nu}Y_2,
    \qquad
    U(Y_2,Y_3)=\frac{1}{2\nu}Y_1.
\end{align*}
Moreover, $U(Y_i,Y_i)=0$ for $i=1,2,3$. \\~\\
Therefore the values of the Nomizu function $\alpha$ are given by
\begin{align*}
\begin{matrix*}[l]
    \alpha(Y_1,Y_2)=-\dfrac{1}{\nu}Y_3, &&     \alpha(Y_1,Y_3)=\dfrac{1}{\nu}Y_2, &&  \alpha(Y_2,Y_3)=\dfrac{1}{\nu}Y_1, \\[1em]
    \alpha(Y_2,Y_1)=-\dfrac{1}{\nu}Y_3, && 
    \alpha(Y_3,Y_1)=0, &&
    \alpha(Y_3,Y_2)=0.
\end{matrix*}
\end{align*}
Also
\begin{align*}
    \alpha(Y_i,Y_i)=0,\qquad i=1,2,3.
\end{align*}

\noindent Let $E$ be the vector field defining the Weyl connection, given in the basis $Y_1,Y_2,Y_3$ by
\begin{align*}
    E=e_1Y_1+e_2Y_2+e_3Y_3.
\end{align*}
As the basis $\{Y_1,Y_2,Y_3\}$ is orthonormal, we get that
\begin{align*}
    \left\langle Y_1,E \right\rangle=e_1,
    \qquad
    \left\langle Y_2,E \right\rangle=e_2,
    \qquad
    \left\langle Y_3,E \right\rangle=e_3,
    \qquad
    \left\langle E,E \right\rangle=e_1^2+e_2^2+e_3^2.
\end{align*}
and find
\begin{align*}
    \nonumber \alpha(Y_1,E)
    &=e_1\alpha(Y_1,Y_1)+e_2\alpha(Y_1,Y_2)+e_3\alpha(Y_1,Y_3) \\
    &=-\dfrac{1}{\nu}e_2Y_3+\dfrac{1}{\nu}e_3Y_2,
\end{align*}
\begin{align*}
    \nonumber \alpha(Y_2,E)
    &=e_1\alpha(Y_2,Y_1)+e_2\alpha(Y_2,Y_2)+e_3\alpha(Y_2,Y_3) \\
    &=-\dfrac{1}{\nu}e_1Y_3+\dfrac{1}{\nu}e_3Y_1,
\end{align*}
\begin{align*}
    \nonumber \alpha(Y_3,E)
    &=e_1\alpha(Y_3,Y_1)+e_2\alpha(Y_3,Y_2)+e_3\alpha(Y_3,Y_3) \\
    &=0.
\end{align*}
In particular,
\begin{align*}
    \left\langle \alpha(Y_i,E),Y_i \right\rangle=0,
    \qquad i=1,2,3.
\end{align*}
We now substitute these values into the formula for the Weyl curvature given by Proposition \ref{prop:hat-k}. For the plane spanned by $Y_1,Y_2$, we have
\begin{align*}
    \nonumber \widehat{K}(Y_1,Y_2)
    &=K(Y_1,Y_2)+e_2^2+e_1^2-(e_1^2+e_2^2+e_3^2) \\
    &\qquad -\left\langle Y_2,\alpha(Y_2,E) \right\rangle
    -\left\langle \alpha(Y_1,E),Y_1 \right\rangle \\
    &=\dfrac{1}{\nu}-e_3^2.
\end{align*}
Analogously
\begin{align*}
    \widehat{K}(Y_2,Y_3)
    &=-\frac{1}{\nu}-e_1^2, \\
    \widehat{K}(Y_3,Y_1) &= \frac{1}{\nu}-e_2^2.
\end{align*}

\subsection{Invariant Riemannian metric (II) for SOL}

\subsubsection{Sectional curvatures}
Note that the calculation in \cite{KHL} contains a small inaccuracy, and for the sake of correctness, we recalculate the sectional curvatures, although it does not influence the final results  neither in \cite{KHL} nor in this article.
 The basis $X,Y,Z$ of the Lie algebra again satisfies
\begin{align*}
    [X,Y] = 0, \; [Z,X] = X,\; [Z,Y] = -Y.
\end{align*}
We take the orthonormal Milnor base
$$
Y_1 =
\dfrac{\sqrt{\mu}X+Y}{\sqrt{2}\sqrt{\mu+\sqrt{\mu}}},
\qquad
Y_2 =
\dfrac{-\sqrt{\mu}X+Y}{\sqrt{2}\sqrt{\mu-\sqrt{\mu}}},
\qquad
Y_3 =
\dfrac{1}{\sqrt{\nu}}Z
$$
with the following easily calculated structure constants:
$$
[Y_2,Y_3]
=
\dfrac{\sqrt{\mu+\sqrt{\mu}}}{\sqrt{\nu}\sqrt{\mu-\sqrt{\mu}}}Y_1,
\qquad
[Y_3,Y_1]
=
-\dfrac{\sqrt{\mu-\sqrt{\mu}}}{\sqrt{\nu}\sqrt{\mu+\sqrt{\mu}}}Y_2,
\qquad
[Y_1,Y_2]=0.
$$
For clarity, denote
$$
a_1=
\dfrac{\sqrt{\mu+\sqrt{\mu}}}{\sqrt{\nu}\sqrt{\mu-\sqrt{\mu}}},
\qquad
a_2=
-\dfrac{\sqrt{\mu-\sqrt{\mu}}}{\sqrt{\nu}\sqrt{\mu+\sqrt{\mu}}},
\qquad
a_3=0.
$$
 We compute the principal Ricci curvatures:
\begin{align*}
    \rho_1
    &=
    \dfrac{1}{2}\left(a_1^2-(a_2-a_3)^2\right) =
    \dfrac{1}{2}\left(
    \dfrac{\mu+\sqrt{\mu}}{\nu(\mu-\sqrt{\mu})}
    -
    \dfrac{\mu-\sqrt{\mu}}{\nu(\mu+\sqrt{\mu})}
    \right) \\
    &=
    \dfrac{2\sqrt{\mu}}{\nu(\mu-1)}, \\[1em]
    \rho_2
    &=
    \dfrac{1}{2}\left(a_2^2-(a_3-a_1)^2\right) =
    \dfrac{1}{2}\left(
    \dfrac{\mu-\sqrt{\mu}}{\nu(\mu+\sqrt{\mu})}
    -     \dfrac{\mu+\sqrt{\mu}}{\nu(\mu-\sqrt{\mu})}
    \right) \\
    &=
    -\dfrac{2\sqrt{\mu}}{\nu(\mu-1)}, \\[1em]
    \rho_3
    &=
    \dfrac{1}{2}\left(a_3^2-(a_1-a_2)^2\right) =
    -\dfrac{1}{2}
    \left(
    \dfrac{\sqrt{\mu+\sqrt{\mu}}}{\sqrt{\nu}\sqrt{\mu-\sqrt{\mu}}}
    +
    \dfrac{\sqrt{\mu-\sqrt{\mu}}}{\sqrt{\nu}\sqrt{\mu+\sqrt{\mu}}}
    \right)^2 \\
    &=
    -\dfrac{2\mu}{\nu(\mu-1)}.
\end{align*}
Therefore
\begin{align*}
    K(Y_1,Y_2)
    &=
    \dfrac{\rho_1+\rho_2-\rho_3}{2} =
    \dfrac{1}{2}\left(
    \dfrac{2\sqrt{\mu}}{\nu(\mu-1)}
    -
    \dfrac{2\sqrt{\mu}}{\nu(\mu-1)}
    +
    \dfrac{2\mu}{\nu(\mu-1)}\right) \\
    &=
    \dfrac{\mu}{\nu(\mu-1)}, \\[1em]
    K(Y_2,Y_3)
    &=
    \dfrac{\rho_2+\rho_3-\rho_1}{2} =
    \dfrac{1}{2}\left(
    -\dfrac{2\sqrt{\mu}}{\nu(\mu-1)}
    -
    \dfrac{2\mu}{\nu(\mu-1)}
    -
    \dfrac{2\sqrt{\mu}}{\nu(\mu-1)}\right) \\
    &=
    -\dfrac{\mu+2\sqrt{\mu}}{\nu(\mu-1)}, \\[1em]
    K(Y_3,Y_1)
    &=
    \dfrac{\rho_3+\rho_1-\rho_2}{2} =
    \dfrac{1}{2}\left(
    -\dfrac{2\mu}{\nu(\mu-1)}
    +
    \dfrac{2\sqrt{\mu}}{\nu(\mu-1)}
    +
    \dfrac{2\sqrt{\mu}}{\nu(\mu-1)}\right) \\
    &=
    \dfrac{2\sqrt{\mu}-\mu}{\nu(\mu-1)}.
\end{align*}
Now we calculate the Weyl curvatures by the same method as in case (I).
Starting with the bilinear function $U(-,-)$, the only possibly nonzero term in this case it
\begin{align*}
    2\left\langle U(Y_1,Y_2),Y_3 \right\rangle
    &=\left\langle Y_1,[Y_3,Y_2] \right\rangle+\left\langle [Y_3,Y_1],Y_2 \right\rangle \\
    &=\left\langle Y_1,-a_1Y_1 \right\rangle+\left\langle a_2Y_2,Y_2 \right\rangle
    =a_2-a_1.
\end{align*}
Hence
\begin{align*}
    U(Y_1,Y_2)=\frac{a_2-a_1}{2}Y_3.
\end{align*}
Similarly,
\begin{align*}
    2\left\langle U(Y_1,Y_3),Y_2 \right\rangle
    &=\left\langle Y_1,[Y_2,Y_3] \right\rangle+\left\langle [Y_2,Y_1],Y_3 \right\rangle=a_1, \\
    2\left\langle U(Y_2,Y_3),Y_1 \right\rangle
    &=\left\langle Y_2,[Y_1,Y_3] \right\rangle+\left\langle [Y_1,Y_2],Y_3 \right\rangle=-a_2.
\end{align*}
Therefore
\begin{align*}
    U(Y_1,Y_2)=\frac{a_2-a_1}{2}Y_3,
    \qquad
    U(Y_1,Y_3)=\frac{a_1}{2}Y_2,
    \qquad
    U(Y_2,Y_3)=-\frac{a_2}{2}Y_1.
\end{align*}
Moreover, $U(Y_i,Y_i)=0$ for $i=1,2,3$. \\~\\ Therefore values of the Nomizu function $\alpha$ are given by
\begin{align*}
    \begin{matrix*}[l]
      \alpha(Y_1,Y_2)=\dfrac{a_2-a_1}{2}Y_3, && \alpha(Y_1,Y_3)=\dfrac{a_1-a_2}{2}Y_2, && \alpha(Y_2,Y_3)=\dfrac{a_1-a_2}{2}Y_1, 
       \\[1em]
       \alpha(Y_2,Y_1)=\dfrac{a_2-a_1}{2}Y_3,&&
      \alpha(Y_3,Y_1)= \dfrac{a_1+a_2}{2}Y_2, && \alpha(Y_3,Y_2)=-\dfrac{a_1+a_2}{2}Y_1
    \end{matrix*}
\end{align*}
and, as previously,
\begin{align*}
    \alpha(Y_i,Y_i)=0,\qquad i=1,2,3.
\end{align*}
Taking
\begin{align*}
    E=e_1Y_1+e_2Y_2+e_3Y_3,
\end{align*}
 we find
\begin{align*}
      \alpha(Y_1,E)
    &=e_1\alpha(Y_1,Y_1)+ e_2\alpha(Y_1,Y_2)+ e_3\alpha(Y_1,Y_3) \\
    &=\frac{a_2-a_1}{2}e_2Y_3+\frac{a_1-a_2}{2}e_3Y_2, \\[1em]
    \alpha(Y_2,E)
    &=e_1\alpha(Y_2,Y_1)+ e_2\alpha(Y_2,Y_2)+ e_3\alpha(Y_2,Y_3) \\
    &=\frac{a_2-a_1}{2}e_1Y_3+\frac{a_1-a_2}{2}e_3Y_1, \\[1em]
    \alpha(Y_3,E)
    &=e_1\alpha(Y_3,Y_1)+ e_2\alpha(Y_3,Y_2)+ e_3\alpha(Y_3,Y_3) \\
    &=\frac{a_1+a_2}{2}e_1Y_2-\frac{a_1+a_2}{2}e_2Y_1
\end{align*}
In particular,
\begin{align*}
    \left\langle \alpha(Y_i,E),Y_i \right\rangle=0,
    \qquad i=1,2,3.
\end{align*}
Finally, we have
\begin{align*}
    \nonumber \widehat{K}(Y_1,Y_2)
    &=K(Y_1,Y_2)+e_2^2+e_1^2-(e_1^2+e_2^2+e_3^2) \\
    &\qquad \qquad \qquad -\left\langle Y_2,\alpha(Y_2,E) \right\rangle
    -\left\langle \alpha(Y_1,E),Y_1 \right\rangle \\
    &=\frac{\mu}{\nu(\mu-1)}-e_3^2.
\end{align*}
Analogously
\begin{align*}
    \widehat{K}(Y_2,Y_3) =-\frac{\mu+2\sqrt{\mu}}{\nu(\mu-1)}-e_1^2, \\[1em]
    \widehat{K}(Y_3,Y_1) =\frac{2\sqrt{\mu}-\mu}{\nu(\mu-1)}-e_2^2.
\end{align*}
\subsection{Heisenberg algebra}
\noindent The standard basis $\{X,Y,Z\}$ of the Lie algebra of the Heisenberg group $\mathrm{Nil}$ satisfies
\begin{align*}
\noindent [X,Y]=Z,
\qquad
[Z,X]=[Z,Y]=0.
\end{align*}

\noindent We consider the orthonormal Milnor base
\begin{align*}
\noindent
Y_1=\frac{1}{\sqrt{\lambda}}X,
\qquad
Y_2=\frac{1}{\sqrt{\lambda}}Y,
\qquad
Y_3=Z.
\end{align*}

\noindent with the structure constants
\begin{align*}
\noindent
[Y_1,Y_2]=\frac{1}{\lambda}Y_3,
\qquad
[Y_2,Y_3]=0,
\qquad
[Y_3,Y_1]=0.
\end{align*}

\noindent The sectional curvatures are
\begin{align*}
K(Y_1,Y_2)=-\frac{3}{4\lambda^2},
\qquad
K(Y_2,Y_3)=\frac{1}{4\lambda^2},
\qquad
K(Y_3,Y_1)=\frac{1}{4\lambda^2}.
\end{align*}

\noindent The values of the terms $U(Y_1,Y_j),\; i=1,2,3$ are
\begin{align*}
U(Y_1,Y_2)=0,
\qquad
U(Y_1,Y_3)=-\frac{1}{2\lambda}Y_2,
\qquad
U(Y_2,Y_3)=\frac{1}{2\lambda}Y_1.
\end{align*}

\noindent Also,
\begin{align*}
U(Y_i,Y_i)=0
\qquad
\text{for } i=1,2,3.
\end{align*}

\noindent Thus the relevant values of the Nomizu function $\alpha$ are
\begin{align*}
\begin{matrix*}[l]
    \alpha(Y_1,Y_2)=\dfrac{1}{2\lambda}Y_3, &&
\alpha(Y_1,Y_3)=-\dfrac{1}{2\lambda}Y_2, && \alpha(Y_2,Y_3)=\dfrac{1}{2\lambda}Y_1, \\[1em]
\alpha(Y_2,Y_1)=-\dfrac{1}{2\lambda}Y_3, &&
\alpha(Y_3,Y_1)=-\dfrac{1}{2\lambda}Y_2, &&
\alpha(Y_3,Y_2)=\dfrac{1}{2\lambda}Y_1.
\end{matrix*}
\end{align*}

\noindent For any left-invariant vector field $E = e_1Y_1 + e_2Y_2 + e_3Y_3,$ the values of $\alpha(Y_i,E)$ are given by
\begin{align*}
\alpha(Y_1,E) &= \dfrac{e_2}{2\lambda}Y_2 - \dfrac{e_3}{2\lambda}Y_3, \\
\alpha(Y_2,E) &= \dfrac{e_3}{2\lambda}Y_1 -\dfrac{e_1}{2\lambda}Y_3 , \\
\alpha(Y_3,E) &=  \dfrac{e_2}{2\lambda}Y_1-\dfrac{e_1}{2\lambda}Y_2.
\end{align*}

\noindent In particular,
\begin{align*}
\noindent
\langle \alpha(Y_i,E),Y_i\rangle=0
\qquad
\text{for } i=1,2,3.
\end{align*}

\noindent Hence
\begin{align*}
\widehat{K}(Y_1,Y_2) &=
-\frac{3}{4\lambda^2}-e_3^2. \\
\widehat{K}(Y_2,Y_3) &=
\frac{1}{4\lambda^2} - e_1^2, \\
\widehat{K}(Y_3,Y_1) &=
\frac{1}{4\lambda^2} - e_2^2.
\end{align*}

\subsection{$\mathbb{R}^2 \rtimes \mathfrak{so}(2)$}

\noindent The Lie algebra
$\mathbb{R}^2\rtimes\mathfrak{so}(2)$ of the universal cover $\mathbb{R}^2\rtimes SO(2)$ of  admits the basis $\{X,Y,Z\}$ satisfying
\begin{align*}
[X,Y]=0,\qquad [Z,X]=-Y,\qquad [Z,Y]=X.
\end{align*}
Consider the orthonormal Milnor base given by
\begin{align*}
Y_1=X,
\qquad
Y_2=\frac{1}{\sqrt{\mu}}Y,
\qquad
Y_3=\frac{1}{\sqrt{\nu}}Z
\end{align*}
with the structure constants
\begin{align*}
[Y_2,Y_3]
=
-\frac{1}{\sqrt{\mu\nu}}Y_1,
\qquad
[Y_3,Y_1]
=
-\frac{\sqrt{\mu}}{\sqrt{\nu}}Y_2,
\qquad
[Y_1,Y_2]=0.
\end{align*}
\noindent The values of the terms $U(Y_1,Y_j),\; i=1,2,3$ are
\begin{align*}
U(Y_1,Y_2)= \frac{1-\mu}{2\sqrt{\mu\nu}}Y_3,
\qquad
U(Y_1,Y_3)=-\frac{1}{2\sqrt{\mu\nu}}Y_2, \qquad 
U(Y_2,Y_3)= \frac{\mu}{2\sqrt{\mu\nu}}Y_1.
\end{align*}
with \begin{align*}
    U(Y_i,Y_i) = 0 \qquad \text{for}\; i=1,2,3.
\end{align*}
We then find the values $\alpha(Y_i,Y_j)$ of the Nomizu function:
\begin{align*}
\begin{matrix*}[l]
\alpha(Y_1,Y_2)=\dfrac{1-\mu}{2\sqrt{\mu\nu}}Y_3, &&
\alpha(Y_1,Y_3)=-\dfrac{1-\mu}{2\sqrt{\mu\nu}}Y_2, &&
\alpha(Y_2,Y_3)=-\dfrac{1-\mu}{2\sqrt{\mu\nu}}Y_1. \\[1em]
\alpha(Y_2,Y_1)=\dfrac{1-\mu}{2\sqrt{\mu\nu}}Y_3, &&
\alpha(Y_3,Y_1)=-\dfrac{1+\mu}{2\sqrt{\mu\nu}}Y_2, &&
\alpha(Y_3,Y_2)=\dfrac{1+\mu}{2\sqrt{\mu\nu}}Y_1.
\end{matrix*}
\end{align*}
Taking the left-invariant vector field $E = e_1Y_1 + e_2Y_2 + e_3Y_3$, we obtain
\begin{align*}
\alpha(Y_1,E) &= 
\frac{1-\mu}{2\sqrt{\mu\nu}}
\left(e_2Y_3-e_3Y_2\right), \\
\alpha(Y_2,E) &=
\frac{1-\mu}{2\sqrt{\mu\nu}}
\left(e_1Y_3-e_3Y_1\right), \\
\alpha(Y_3,E) &=
\frac{1+\mu}{2\sqrt{\mu\nu}}
\left(-e_1Y_2+e_2Y_1\right).
\end{align*}

\noindent Therefore
\begin{align*}
\langle \alpha(Y_i,E),Y_i\rangle=0
\qquad
\text{for } i=1,2,3.
\end{align*}
Finally, we obtain
\begin{align*}
\widehat{K}(Y_1,Y_2) &= 
\frac{(1-\mu)^2}{4\mu\nu} - e_3^2, \\
\widehat{K}(Y_2,Y_3) &= 
-\frac{(1-\mu)(3+\mu)}{4\mu\nu} -e_1^2, \\
\widehat{K}(Y_3,Y_1) &=
\frac{(1-\mu)(1+3\mu)}{4\mu\nu} - e_2^2.
\end{align*}

\subsection{Completion of proof}
We have proved that for the Milnor bases $\{Y_1,Y_2,Y_3\}$ the Weyl curvatures $\widehat{K}(Y_i,Y_j)\leq 0$, after the scaling of $E$, for all 3-dimensional unimodular solvable Lie algebras.  Now we restrict our considerations to the completely solvable case, that is to the groups NILand SOL. We  observe the following. By Theorem \ref{thm:aleks}  in each of these two cases the classification of left-invariant Riemannian metrics up to automorphism is equivalent to the classification  up to isometry. Therefore, if we change the base to another orthonormal base $\{Y_1',Y_2',Y_3'\}$ we can find an automorphism $\varphi\in\operatorname{Aut}(\mathfrak{g})$  in the isometry class which sends $Y_i$ to $Y_i'$.
Now we neeed to consider each of these cases separately.

\subsubsection{Group SOL}
\vskip6pt
{\bf Metric (II) on Sol}

Consider the left-invariant metric of type (II) on Sol. We first restrict
attention to automorphisms in the connected component of the identity. By the
description of \(\operatorname{Aut}(\mathfrak g)\), every such automorphism is
of the form
\[
\varphi(Y_1)=x_1Y_1,\qquad
\varphi(Y_2)=y_2Y_2,\qquad
\varphi(Y_3)=x_3Y_1+y_3Y_2+Y_3,
\]
where \(x_1,y_2\neq 0\). Since we are in the connected component of the
identity, we may assume \(x_1,y_2>0\).

Let $E=e_3Y_3.$ Then
\[ \varphi(E)=e_3\varphi(Y_3) =e_3x_3Y_1+e_3y_3Y_2+e_3Y_3. \]
Thus, with respect to the original Milnor base, the components of
\(\varphi(E)\) are
\[e_1'=e_3x_3,\qquad e_2'=e_3y_3,\qquad e_3'=e_3. \]

Recall that for the Milnor base  on SOL we have
\[
[Y_2,Y_3]=a_1Y_1,\qquad
[Y_3,Y_1]=a_2Y_2,\qquad
[Y_1,Y_2]=0.
\]

As \(e_1=e_2=0\), the Weyl sectional curvatures can be easily computed:
\[
\widehat K(\varphi(Y_1),\varphi(Y_2))=K_{12}-e_3^2,
\]
\[
\widehat K(\varphi(Y_2),\varphi(Y_3))=\frac{x_3^2(K_{12}-e_3^2)+K_{23}}{1+x_3^2},
\]
\[
\widehat K(\varphi(Y_3),\varphi(Y_1))=\frac{y_3^2(K_{12}-e_3^2)+K_{31}}{1+y_3^2}.
\]
Thus these curvatures are non-positive for every \(x_3,y_3\) if and only if
\[
K_{12}-e_3^2\leq 0,
\qquad
K_{23}\leq 0,
\qquad
K_{31}\leq 0.
\]
The second condition is automatically satisfied: substituting the values of $K_{12}$ and $K_{23}$ yields
\[
-\frac{(a_1-a_2)(3a_1+a_2)}{4}\leq 0.
\]
Since \(a_1-a_2>0\), this is equivalent to $3a_1+a_2\geq 0$,
which always holds. The first condition says
\[
|e_3|\geq \frac{a_1-a_2}{2}.
\]
and cannot be simplified further. The third condition is
\[
\frac{(a_1-a_2)(a_1+3a_2)}{4}\leq 0.
\]
Since \(a_1-a_2>0\), this is equivalent to $a_1+3a_2\leq 0,$ or equivalently to $\dfrac{-a_2}{a_1} \geq \dfrac{1}{3}.$ Substituting the values of $a_1$ and $a_2$ in terms of $\mu,\;\nu$ we obtain
\[
-\frac{a_2}{a_1}
=
\frac{\mu-\sqrt{\mu}}{\mu+\sqrt{\mu}}
=
\frac{\sqrt{\mu}-1}{\sqrt{\mu}+1}.
\]
Therefore we obtain
\[
\frac{\sqrt{\mu}-1}{\sqrt{\mu}+1}\geq \frac13.
\]
Solving this inequality in terms of $\mu$, we obtain that $\mu \geq 4.$ \\~\\
Thus the result is as follows: \begin{itemize}
    \item $\mu < 4$: no SNP connections,
    \item $\mu = 4$: two choices $E = \pm \dfrac{a_1-a_2}{2}Y_3$,
    \item $\mu > 4$: the two rays $|e_3| \geq \dfrac{a_1-a_2}{2}.$
\end{itemize}

Also, if one takes a matrix from the other component of Aut($\mathfrak{g}$), the calculations will be the same, only with interchanging vectors $Y_1$ and $Y_2$ and changing $E$ to $-E$. But the non-positivity won't change anyway, as changing $E$ to $-E$ won't change the terms $e_i^2$.

\subsection*{Metric (I) on Sol}

Now consider the left-invariant metric of type (I) on SOL, given by
\[
\begin{pmatrix}
1 & 0 & 0\\
0 & 1 & 0\\
0 & 0 & \nu
\end{pmatrix},
\qquad \nu>0.
\]
For the corresponding Milnor base we have
\[
[Y_2,Y_3]=a_1Y_1,\qquad
[Y_3,Y_1]=a_2Y_2,\qquad
[Y_1,Y_2]=0,
\]
where
\[
a_1=\frac{1}{\sqrt{\nu}},
\qquad
a_2=-\frac{1}{\sqrt{\nu}}.
\]
As before, we restrict attention to automorphisms in the connected component
of the identity. Thus
\[
\varphi(Y_1)=x_1Y_1,\qquad
\varphi(Y_2)=y_2Y_2,\qquad
\varphi(Y_3)=x_3Y_1+y_3Y_2+Y_3,
\]
where \(x_1,y_2>0\). As before, consider $E = e_3Y_3$. \\~\\

For the metric of type (I), the Riemannian sectional curvatures are
\[
K_{12}=\frac{1}{\nu}, \qquad 
K_{23} = 
-\frac{1}{\nu}, \qquad 
K_{31} =-\frac{1}{\nu}.
\]
Hence
\[
\widehat K(\varphi(Y_1),\varphi(Y_2))
=
K_{12}-e_3^2
=
\frac{1}{\nu}-e_3^2.
\]
Moreover,
\[
\widehat K(\varphi(Y_2),\varphi(Y_3))
=
\frac{x_3^2(K_{12}-e_3^2)+K_{23}}{1+x_3^2}
=
\frac{x_3^2\left(\frac{1}{\nu}-e_3^2\right)-\frac{1}{\nu}}
{1+x_3^2},
\]
and
\[
\widehat K(\varphi(Y_3),\varphi(Y_1))
=
\frac{y_3^2(K_{12}-e_3^2)+K_{31}}{1+y_3^2}
=
\frac{y_3^2\left(\frac{1}{\nu}-e_3^2\right)-\frac{1}{\nu}}
{1+y_3^2}.
\]

Therefore these expressions are non-positive for all \(x_3,y_3\in\mathbb R\)
if and only if
\[
K_{12}-e_3^2\leq 0,
\qquad
K_{23}\leq 0,
\qquad
K_{31}\leq 0.
\]
The last two inequalities are automatic, because
\[
K_{23}=K_{31}=-\frac{1}{\nu}<0.
\]
The first condition gives
\[
\frac{1}{\nu}-e_3^2\leq 0,
\]
or equivalently
\[
|e_3|\geq \frac{1}{\sqrt{\nu}}.
\]

\subsubsection{The group NIL}

The left-invariant metric on Nil is given by
\[
\begin{pmatrix}
\lambda & 0 & 0\\
0 & \lambda & 0\\
0 & 0 & 1
\end{pmatrix},
\qquad \lambda>0.
\]
We use the orthonormal Milnor base
\[
Y_1=\frac{1}{\sqrt{\lambda}}X,\qquad
Y_2=\frac{1}{\sqrt{\lambda}}Y,\qquad
Y_3=Z.
\]
with structural constants
\[
[Y_1,Y_2]=\frac{1}{\lambda}Y_3,\qquad
[Y_2,Y_3]=0,\qquad
[Y_3,Y_1]=0.
\]
Thus the only non-zero structure constant is
\[
a_3=: \frac{1}{\lambda}.
\]
The sectional curvatures are
\[
K(Y_1,Y_2)=-\frac{3}{4\lambda^2},
\qquad
K(Y_2,Y_3)=\frac{1}{4\lambda^2},
\qquad
K(Y_3,Y_1)=\frac{1}{4\lambda^2}.
\]

An automorphism of Nil is of form
\[
\begin{pmatrix}
a & c & 0\\
b & d & 0\\
* & * & ad-bc
\end{pmatrix},
\qquad ad-bc\neq 0.
\]
Therefore, with respect to the Milnor base, we may write
\[
\varphi(Y_1)=aY_1+bY_2+rY_3,
\]
\[
\varphi(Y_2)=cY_1+dY_2+sY_3,
\]
\[
\varphi(Y_3)=(ad-bc)Y_3.
\]
Assume that $\varphi(Y_i)$ are scaled such that $\varphi(Y_i) = 1$. This yields
$\|\varphi(Y_3)\|^2=(ad-bc)^2,$ and thus $(ad-bc)^2 = 1$. By restricting to the connected component of the identity, we may assume $ad - bc = 1$. \\~\\
We also assume that all the planes are spanned by orthogonal bases and $\varphi$ maps the base $\{Y_1,Y_2,Y_3\}$ to the orthogonal base $\{\varphi(Y_1),\;\varphi(Y_2),\; \varphi(Y_3)\}$. Since $
\langle \varphi(Y_1),\varphi(Y_3)\rangle=r(ad-bc)=r$ and $\langle \varphi(Y_2),\varphi(Y_3)\rangle=s(ad-bc)=s,$ we obtain that $r=s=0$ by orthogonality. \\~\\
Thus
\[
\varphi(Y_1)=aY_1+bY_2,
\qquad
\varphi(Y_2)=cY_1+dY_2,
\qquad
\varphi(Y_3)=Y_3.
\]
and the matrix
\[
\begin{pmatrix}
a & c\\
b & d
\end{pmatrix} \qquad \text{lies in SO(2).}
\]
Therefore we can write
\[
\varphi(Y_1)=\cos\theta\,Y_1+\sin\theta\,Y_2,
\]
\[
\varphi(Y_2)=-\sin\theta\,Y_1+\cos\theta\,Y_2,
\]
Let
\[
E=e_1Y_1+e_2Y_2+e_3Y_3.
\]
We now compute the Weyl sectional curvatures of the planes spanned by
\(\varphi(Y_i)\) and \(\varphi(Y_j)\). Since the vectors
\(\varphi(Y_1),\varphi(Y_2),\varphi(Y_3)\) are orthonormal, we can use that the Weyl sectional curvature is given by
\[\widehat K_E(X,Y)=K(X,Y)+\langle Y,E\rangle^2+\langle X,E\rangle^2-\langle E,E\rangle-\langle Y,\alpha(Y,E)\rangle-\langle \alpha(X,E),X\rangle.\]

First, we observe that the linear terms vanish. Indeed, for any horizontal
unit vector
\[
U=uY_1+vY_2
\]
one has
\[
\langle \alpha(U,E),U\rangle=0.
\]
Also,
\[
\langle \alpha(Y_3,E),Y_3\rangle=0.
\]
Hence the formula for the Weyl sectional curvature reduces to
\[\widehat K_E(X,Y)=K(X,Y)+\langle X,E\rangle^2+\langle Y,E\rangle^2-\langle E,E\rangle.\]
Equivalently, if \(N\) is the unit normal vector to the plane
\(\operatorname{span}(X,Y)\), then
\[
\widehat K_E(X,Y)
=
K(X,Y)-\langle E,N\rangle^2.
\]

For the first plane we have
\[
\operatorname{span}(\varphi(Y_1),\varphi(Y_2))
=
\operatorname{span}(Y_1,Y_2).
\]
Its unit normal vector is \(Y_3\). Therefore
\[
\widehat K_E(\varphi(Y_1),\varphi(Y_2))= K(Y_1,Y_2)-e_3^2 = -\frac{3}{4\lambda^2}-e_3^2.
\]
This  is always negative. For the plane, $\operatorname{span}(\varphi(Y_2),\varphi(Y_3)),$ the unit normal vector is \(\varphi(Y_1)\). Hence $\widehat K_E(\varphi(Y_2),\varphi(Y_3)) = K(Y_2,Y_3)-\langle E,\varphi(Y_1)\rangle^2.$ Since $\varphi(Y_1)=\cos\theta\,Y_1+\sin\theta\,Y_2,$ we obtain that $\langle E,\varphi(Y_1)\rangle=e_1\cos\theta+e_2\sin\theta$.
Therefore
\[
\widehat K_E(\varphi(Y_2),\varphi(Y_3))=\frac{1}{4\lambda^2}-(e_1\cos\theta+e_2\sin\theta)^2.
\]

For the third plane, $\operatorname{span}(\varphi(Y_3),\varphi(Y_1)),$
the unit normal vector is \(\varphi(Y_2)\). Hence $\widehat K_E(\varphi(Y_3),\varphi(Y_1))=K(Y_3,Y_1)-\langle E,\varphi(Y_2)\rangle^2.$
Since $\varphi(Y_2)=-\sin\theta\,Y_1+\cos\theta\,Y_2,$ we get that $\langle E,\varphi(Y_2)\rangle=-e_1\sin\theta+e_2\cos\theta$.
Therefore
\[
\widehat K_E(\varphi(Y_3),\varphi(Y_1))
=
\frac{1}{4\lambda^2}
-
(-e_1\sin\theta+e_2\cos\theta)^2.
\]

We now check whether these expressions can be non-positive for every
automorphic isometry \(\varphi\), or equivalently for every \(\theta\). \\~\\
Let $E_H=e_1Y_1+e_2Y_2$
be the horizontal part of \(E\). Assume \(E_H=0\), i.e.
\[
e_1=e_2=0.
\]
Therefore
\[
\widehat K_E(\varphi(Y_2),\varphi(Y_3))=\widehat K_E(\varphi(Y_3),\varphi(Y_1))=\frac{1}{4\lambda^2}>0.
\]
Therefore for $E_H \neq 0$, there is a positive Weyl sectional curvature. Assume \(E_H\neq 0\). Then we may choose \(\theta\) such that $\varphi(Y_1)\perp E_H$, i.e. $\langle E,\varphi(Y_1)\rangle=0.$ Consequently,
\[
\widehat K_E(\varphi(Y_2),\varphi(Y_3))
=
\frac{1}{4\lambda^2}>0.
\]
Thus also in this case at least one of the Weyl sectional curvatures is
positive.

Therefore, for every left-invariant vector field \(E\), there exists an
automorphic isometry \(\varphi\) such that
\[
\widehat K_E(\varphi(Y_i),\varphi(Y_j))>0
\]
for some \(i\neq j\). Hence Nil admits no non-positive left-invariant Weyl
connections.

\end{document}